\newtheorem{theorem}{Theorem}[section]
\newtheorem{lemma}[theorem]{Lemma}
\newtheorem{corollary}[theorem]{Corollary}
\theoremstyle{definition}
\theoremstyle{remark}
\numberwithin{equation}{section}
\begin{document}

\title{Non-commutative Callebaut inequality}
\author[M.S. Moslehian, J.S. Matharu, J.S. Aujla]{Mohammad Sal Moslehian$^1$, Jagjit Singh Matharu$^2$ and Jaspal Singh Aujla$^3$}

\address{$^1$ Department of Pure Mathematics, Center of Excellence in
Analysis on Algebraic Structures (CEAAS), Ferdowsi University of
Mashhad, P.O. Box 1159, Mashhad 91775, Iran.}
\email{moslehian@ferdowsi.um.ac.ir and moslehian@member.ams.org}

\address{$^2$ Department of Mathematics, Bebe Nanaki University College,
Mithra, Kapurthla, Punjab, India.}
\email{matharujs@yahoo.com}

\address{$^3$ Department of Mathematics, National Institute of Technology,
Jalandhar 144011, Punjab, India.}
\email{aujlajs@nitj.ac.in}

\keywords{Matrix inequality, Hadamard product, tensor product, operator mean, interpolational path, operator weighted geometric mean.}

\subjclass[2010]{Primary 47A63; Secondary 15A45, 47A30.}

\begin{abstract}
We present an operator version of the Callebaut inequality involving
the interpolation paths and apply it to the weighted operator
geometric means. We also establish a matrix version of the Callebaut
inequality and as a consequence obtain an inequality including the
Hadamard product of matrices.
\end{abstract}

\maketitle
%======================================================================================================%
\section{Introduction and Preliminaries}

Let $\mathbb{B}(\mathscr{H})$ denote the algebra of all bounded linear operators acting on a complex
Hilbert space $(\mathscr{H},\langle \cdot,\cdot\rangle)$ and let $I$ be the identity operator. In the case when $\dim \mathscr{H} = n$, we identify $\mathbb{B}(\mathscr{H})$ with the full matrix algebra $\mathcal{M}_n(\mathbb{C})$ of all $n\times n$
matrices with entries in the complex field and denote its identity by $I_n$. The cone of positive operators is denoted by ${\mathbb B}({\mathscr H})_+$. We also denote the set of all invertible positive operators (positive definite matrices, resp.) by $\mathcal{P}$ ($\mathcal{P}_n$, resp.).

The axiomatic theory for operator means for positive operators
acting on Hilbert space operators was established by Kubo and Ando
\cite{KA}. A binary operation $\sigma: {\mathbb B}({\mathscr H})_+
\times {\mathbb B}({\mathscr H})_+ \to {\mathbb B}({\mathscr H})_+$
is called an \emph{operator mean} provided that
\begin{enumerate}
\item[(i)] $I\sigma I=I$;
\item [(ii)] $C^*(A\sigma B)C\leq(C^*AC)\sigma(C^*BC)$;
\item [(iii)] $A_n\downarrow A$ and $B_n\downarrow B$ imply $(A_n\sigma B_n)\downarrow A\sigma B$, where $A_n\downarrow A$ means that $A_1\geq A_2\geq \cdots$ and $A_n\to A$ as $n\to \infty$ in the strong operator topology;
\item [(iv)]  \begin{eqnarray}
\label{sub} A \leq B\,\, \&\,\, C \leq D \Longrightarrow A\sigma C \leq B\sigma D\,.
\end{eqnarray}
\end{enumerate}
It follows from the general theory of means \cite[Theorem 5.7]{seo})
that
\begin{eqnarray}
\label{add}\sum_{j=1}^{m}(A_{j}\sigma B_{j}) \leq (\sum_{j=1}^{m}A_{j})\sigma (\sum_{j=1}^{m} B_{j}).
\end{eqnarray}
There exists an affine order isomorphism between the class of operator means and the class of positive monotone operator functions $f$ defined on $[0,\infty)$ with $f(1)=1$ via $f(t)I=I\sigma (tI)\,\,(t\geq 0)$. In addition, $A \sigma B = A^{1/2}f(A^{-1/2}BA^{-1/2})A^{1/2}$. The operator monotone function $f$ is called the \emph{representing function} of $\sigma$. Using a limit argument by $A_\varepsilon=A+\varepsilon I$, one can extend the definition of $A\sigma B$ to positive operators (positive semidefinite matrices) as well. The dual $\sigma^\bot$ of an operator mean $\sigma$ with the representing function $f$ is the operator mean with representing function $t/f(t)$. So that $A \sigma^\bot B = A^{1/2}(A^{-1/2}BA^{-1/2}) f(A^{-1/2}BA^{-1/2})^{-1} A^{1/2}$. The operator means corresponding to the positive operator monotone functions $f_\sharp(t)=t^{1/2}, f_{\sharp_p}(t)=t^p\,\,(0 \leq p \leq 1)$ are the operator geometric mean $A\sharp B=A^{\frac{1}{2}}\left(A^{\frac{-1}{2}}BA^{\frac{-1}{2}}\right)^{\frac{1}{2}}A^{\frac{1}{2}}$ and the operator weighted geometric mean $A\sharp_pB=A^{\frac{1}{2}}\left(A^{\frac{-1}{2}}BA^{\frac{-1}{2}}\right)^{p}A^{\frac{1}{2}}$.

For an operator mean $\sigma$ satisfying $A\sigma B=B\sigma A$ a parametrized operator mean $\sigma_t$ is called an \emph{interpolational path} for $\sigma$ (see \cite[Section 5.3]{seo} and \cite{F-K}) if it satisfies
\begin{enumerate}
\item[(1)] $A\sigma_0 B=A$, $A\sigma_{1/2}B=A\sigma B$ and $A\sigma_1 B=B$;
\item[(2)] $(A\sigma_p B)\sigma(A\sigma_q B)=A\sigma_{(p+q)/2}B$, for all $p, q \in [0,1]$;
\item[(3)] the map $t \mapsto A\sigma_t B$ is norm continuous for each $A, B$.
\end{enumerate}
It is easy to see that the set of all $r\in [0,1]$ satisfying
\begin{eqnarray}
\label{sigma} (A\sigma_p B)\sigma_r(A\sigma_q B)=A\sigma_{rp+(1-r)q}B\,.
\end{eqnarray}
for all $p, q$ is a convex subset of $[0,1]$ including $0$ and $1$. Therefore \eqref{sigma} is valid for all $p, q, r \in [0,1]$, cf. \cite[Lemma 1]{F-K}.

The power means $Am_r B=A^{1/2}\left(\left(1+(A^{-1/2}BA^{-1/2})^r\right)/2\right)^{1/r}A^{1/2}\,\,(r\in [-1,-1])$ are typical examples of interpolational means, whose interpolational paths, by \cite[Theorem 5.24]{seo}, are
\begin{eqnarray}\label{path}
Am_{r,t}B=A^{\frac{1}{2}}\left(1-t+t\left(A^{\frac{-1}{2}}
BA^{\frac{-1}{2}} \right)^r \right)^{\frac{1}{r}}
A^{\frac{1}{2}}\quad (t \in [0,1])\,.
\end{eqnarray}
One of the fundamental inequalities in mathematics is the Cauchy--Schwarz inequality. There are many generalizations and applications of this inequality; see the monograph \cite{DRA}. There are some Cauchy--Schwarz type inequalities for Hilbert space operators and matrices involving unitarily invariant norms given by Joci\'c \cite{JOC} and Kittaneh \cite{KIT}. Morover, Niculescu \cite{NIC}, Joi\c{t}a \cite{JOI}, Ili\v sevi\'c--Varo\v{s}anec \cite{I-V}, Moslehian--Persson  \cite{MOS}, Arambasi\'c--Baki\'c--Moslehian \cite{ABM} have investigated the Cauchy--Schwarz inequality and its various reverses in the framework of $C^*$-algebras and Hilbert $C^*$-modules. An application of the covariance-variance inequality to the Cauchy--Schwarz inequality was obtained by Fujii--Izumino--Nakamoto--Seo \cite{nak}. Some operator versions of the Cauchy--Schwarz inequality with simple conditions for the case of equality are presented by Fujii \cite{FUJ}.

In 1965, Callebaut \cite{CAL} gave the following refinement of the Cauchy--Schwarz inequality:

{\it Given a real number $s$, non-proportional sequences of positive real numbers $\{a_i\}_{i=1}^n, \{b_i\}_{i=1}^n$, the function $f(r,s)=\left(\sum_{i=1}^na_i^{s+r}b_i^{s-r}\right)\left(\sum_{i=1}^na_i^{s-r}b_i^{s+r}\right)$ is increasing in $0\leq |r| \leq 1$. If $\{a_i\}_{i=1}^n, \{b_i\}_{i=1}^n$ are proportional, then this expression is independent of $r$.}

Thus one can obtain many well-ordered inequalities lying between the left and the right sides of the Cauchy--Schwarz inequality. In particular, if $0\leq t\leq
s\leq \frac{1}{2}$ or $\frac{1}{2}\leq s\leq t\leq 1$, then
\begin{eqnarray} \label{CAL}
\left( \sum_{j=1}^{m}a_{j}^{1/2}b_{j}^{1/2}\right) ^{2}\leq \left(
\sum_{j=1}^{m}a_{j}^{s}b_{j}^{1-s}\right) \left(
\sum_{j=1}^{m}a_{j}^{1-s}b_{j}^{s}\right) &\leq &\left(
\sum_{j=1}^{m}a_{j}^{t}b_{j}^{1-t}\right) \left(
\sum_{j=1}^{m}a_{j}^{1-t}b_{j}^{t}\right)  \notag \\
\text{ \ \ \ \ \ \ \ \ \ \ \ \ } &\leq &\left( \sum_{j=1}^{m}a_{j}\right)
\left( \sum_{j=1}^{m}b_{j}\right)
\end{eqnarray}
for all positive real numbers $a_{j},b_{j}\,\,(1\leq j\leq m)$. This triple inequality is well-known as the Callebaut inequality. Applying H\"older's inequality, McLaughlin and Metcalf \cite{MM} obtained it in a simple fashion. The method of Callebaut may be used for finding other interesting refinements of classical inequalities, see \cite{EM, AAN} and references therein. Wada \cite{WAD} gave an operator version of the Callebaut inequality by showing that if $A$ and $B$ are positive operators on a Hilbert space and if $\sigma$ is an operator mean, then $$(A\sharp B)\otimes(A\sharp B)\leq\frac 12\left\{(A\sigma B)\otimes(A\sigma^\bot B)+(A\sigma^\bot B)\otimes(A\sigma B)\right\}\leq \\ \frac 12\{(A\otimes B)+(B\otimes A)\}\,.$$

The purpose of the paper is to present some noncommutative versions of the Callebaut inequality. More precisely, we give an operator Callebaut inequality involving the interpolation paths and apply it to the weighted operator geometric means. We also establish a matrix version of the Callebaut inequality and as a consequence obtain an inequality including the Hadamard product of matrices.
%======================================================================================================%
\section{Callebaut inequality for Hilbert space operators}

Our first operator version of \eqref{CAL} reads as follows:

\begin{theorem}\label{thmain}
\label{thmcalle} Let $A_{j},B_{j}\in \mathcal{P}\,\,(1\leq j\leq m)$ and $\sigma$ be an operator mean. Then
\begin{align}
\label{1}
\sum_{j=1}^{m}\left(A_{j}\sharp B_{j}\right)\leq \left( \sum_{j=1}^{m}A_{j}\sigma B_{j}\right) \sharp
\left(\sum_{j=1}^{m} A_{j}\sigma^{\bot}B_{j}\right)\leq \left(\sum_{j=1}^{m}A_{j}\right)\sharp \left(\sum_{j=1}^{m}B_{j}\right)\,.
\end{align}
Furthermore, if $\sigma_t$ is an iterpolational path for $\sigma$ such that $\sigma_t^\bot=\sigma_{1-t}$, then
\begin{align}
\label{2} \left( \sum_{j=1}^{m}A_{j}\sigma_{s}B_{j}\right) \sharp
\left(\sum_{j=1}^{m} A_{j}\sigma_{1-s}B_{j}\right) \leq \left( \sum_{j=1}^{m}A_{j}\sigma_{t}B_{j}\right) \sharp
\left(\sum_{j=1}^{m} A_{j}\sigma_{1-t}B_{j}\right)
\end{align}
for $s$ between $t$ and $1-t$.
\end{theorem}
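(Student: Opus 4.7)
My plan is to prove everything from a single algebraic identity, namely
\[
(A\sigma B)\,\sharp\,(A\sigma^\bot B)=A\sharp B\qquad(A,B\in\mathcal P),
\]
valid for every operator mean $\sigma$. To establish it I will use the Riccati characterization of the geometric mean ($Y=C\sharp D$ is the unique positive solution of $YC^{-1}Y=D$). Setting $C=A\sigma B$, $D=A\sigma^\bot B$ and $X=A^{-1/2}BA^{-1/2}$, and writing $A\sigma B=A^{1/2}f(X)A^{1/2}$, $A\sigma^\bot B=A^{1/2}(X/f(X))A^{1/2}$ in terms of the representing function $f$, the commutation of $X^{1/2}$ with $f(X)$ gives
\[
(A\sharp B)(A\sigma B)^{-1}(A\sharp B)=A^{1/2}X^{1/2}f(X)^{-1}X^{1/2}A^{1/2}=A\sigma^\bot B,
\]
so $Y=A\sharp B$ is that unique solution.

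With this identity the chain \eqref{1} becomes almost mechanical. For the left inequality I will rewrite each summand as $A_j\sharp B_j=(A_j\sigma B_j)\sharp(A_j\sigma^\bot B_j)$ and apply the super-additivity relation \eqref{add} with mean $\sharp$ to pull the sum inside. For the right inequality I will apply \eqref{add} to $\sigma$ and to $\sigma^\bot$ separately, then invoke the monotonicity property \eqref{sub} for $\sharp$, and finally re-apply the identity to $\bigl(\sum_j A_j,\sum_j B_j\bigr)$ to eliminate $\sigma,\sigma^\bot$ on the right-hand side.

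For \eqref{2}, given $s$ between $t$ and $1-t$ I pick $r\in[0,1]$ with $s=rt+(1-r)(1-t)$, so that automatically $1-s=(1-r)t+r(1-t)$. The interpolational identity \eqref{sigma} then yields
\[
A_j\sigma_s B_j=(A_j\sigma_t B_j)\,\sigma_r\,(A_j\sigma_{1-t}B_j),\qquad A_j\sigma_{1-s}B_j=(A_j\sigma_t B_j)\,\sigma_{1-r}\,(A_j\sigma_{1-t}B_j),
\]
and the hypothesis $\sigma_{1-r}=\sigma_r^\bot$ allows me to apply the right-hand inequality of \eqref{1} (already proven) with operator mean $\sigma_r$ and operators $U_j:=A_j\sigma_t B_j$, $V_j:=A_j\sigma_{1-t}B_j$ to obtain \eqref{2} at once. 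The principal obstacle is really just spotting the identity at the start; once it is in hand, the whole theorem reduces to super-additivity \eqref{add}, the monotonicity \eqref{sub} of $\sharp$, and the algebraic self-similarity \eqref{sigma} of the interpolational path.
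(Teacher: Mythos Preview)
Your proposal is correct and follows essentially the same route as the paper's proof: both hinge on the identity $(A\sigma B)\sharp(A\sigma^\bot B)=A\sharp B$, then derive \eqref{1} from super-additivity \eqref{add} and the monotonicity \eqref{sub} of $\sharp$, and obtain \eqref{2} by substituting $U_j=A_j\sigma_t B_j$, $V_j=A_j\sigma_{1-t}B_j$ and reparametrizing via \eqref{sigma}. The only cosmetic difference is how the key identity is verified: the paper computes directly with the congruence property of $\sharp$ and the commutativity of $f(X)$ with $X$, whereas you invoke the Riccati characterization of $\sharp$---both arguments are equivalent one-line checks.
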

\begin{proof}
Let $f$ be the representing function of $\sigma$. Then
\begin{eqnarray}\label{sharp}
(A \sigma B)\sharp(A\sigma^{\bot}B)&=&\left(A^{1/2} f(A^{-1/2}BA^{-1/2}) A^{1/2}\right)\sharp\left(A^{1/2} (A^{-1/2}BA^{-1/2})f(A^{-1/2}BA^{-1/2})^{-1}A^{1/2}\right)\nonumber\\
&=&A^{1/2}\left( f(A^{-1/2}BA^{-1/2})\sharp \left((A^{-1/2}BA^{-1/2})f(A^{-1/2}BA^{-1/2})^{-1} \right) \right)A^{1/2}\nonumber\\
&=&A^{1/2}\left(A^{-1/2}BA^{-1/2}\right)^{1/2}A^{1/2}\nonumber\\
&=&A\sharp B
\end{eqnarray}
for all $A,B \in \mathcal{P}$. It follows from \eqref{add} that
\begin{eqnarray*}
\sum_{j=1}^{m}(A_{j}\sigma B_{j}) \leq (\sum_{j=1}^{m}A_{j})\sigma (\sum_{j=1}^{m} B_{j})\\
\sum_{j=1}^{m}(A_{j}\sigma^{\bot} B_{j}) \leq (\sum_{j=1}^{m}A_{j})\sigma^{\bot} (\sum_{j=1}^{m} B_{j})\,,
\end{eqnarray*}
whence
\begin{eqnarray*}
\left( \sum_{j=1}^{m}A_{j} \sigma B_{j}\right) \sharp
\left(\sum_{j=1}^{m} A_{j}\sigma^{\bot}B_{j}\right) &\leq& \left((\sum_{j=1}^{m}A_{j})\sigma (\sum_{j=1}^{m} B_{j})\right)\sharp\left((\sum_{j=1}^{m}A_{j})\sigma^{\bot}(\sum_{j=1}^{m} B_{j})\right)\\
&&\qquad\qquad\qquad\qquad\qquad\qquad\qquad\quad \qquad (\mbox{by} \eqref{sub})\\
&=&(\sum_{j=1}^{m}A_{j})\sharp (\sum_{j=1}^{m} B_{j})\,, \qquad\qquad\qquad\quad\qquad (\mbox{by} \eqref{sharp})
\end{eqnarray*}
which gives the second inequality of \eqref{1}. Now we prove the first inequality of \eqref{1}:
\begin{eqnarray*}
\left( \sum_{j=1}^{m}A_{j}\sigma B_{j}\right) \sharp
\left(\sum_{j=1}^{m} A_{j}\sigma^{\bot} B_{j}\right) &\geq& \sum_{j=1}^{m}\left((A_{j}\sigma B_j)\sharp (A_{j}\sigma^{\bot}B_{j})\right)\\
&=&\sum_{j=1}^{m}A_{j}\sharp B_{j}\,.  \qquad\qquad\qquad\qquad\qquad\quad (\mbox{by} \eqref{sharp})
\end{eqnarray*}
Next we prove \eqref{2}. Replacing $A_j$ and $B_j$ by $A_j\sigma_t
B_j$ and $A_j\sigma_{1-t}B_j$, respectively, in \eqref{1} and noting
to $\sigma_s^\bot=\sigma_{1-s}$ we get
\begin{align*}
\left( \sum_{j=1}^{m}(A_j\sigma_t B_j)\sigma_{s}(A_j\sigma_{1-t}B_j)\right) &\sharp
\left(\sum_{j=1}^{m} (A_j\sigma_t B_j)\sigma_{1-s}(A_j\sigma_{1-t}B_j)\right)\\
&\leq \sum_{j=1}^{m}(A_j\sigma_t B_j)\sharp \sum_{j=1}^{m}(A_j\sigma_{1-t}B_j)\,.
\end{align*}
The first term of the inequality above, by \eqref{sigma}, is
\begin{align*}
\left( \sum_{j=1}^{m}(A_j\sigma_t B_j)\sigma_{s}(A_j\sigma_{1-t}B_j)\right) &\sharp
\left(\sum_{j=1}^{m} (A_j\sigma_t B_j)\sigma_{1-s}(A_j\sigma_{1-t}B_j)\right)\\
&=\left( \sum_{j=1}^{m}(A_j\sigma_{ts+(1-t)(1-s)}B_j)\right)\sharp \left(\sum_{j=1}^{m}(A_j\sigma_{(1-t)s+t(1-s)}B_j)\right)\\
&=\left( \sum_{j=1}^{m}(A_j\sigma_{ts+(1-t)(1-s)}B_j)\right)\sharp \left(\sum_{j=1}^{m}(A_j\sigma_{1-(ts+(1-t)(1-s))}B_j)\right)\,.
\end{align*}
If $s$ is between $t$ and $1-t$, then there is $s_0\in [0,1]$ such
that $s = ts_0+(1-t)(1-s_0)$. This shows that the desired inequality
holds.
\end{proof}
%======================================================================================================%
It follows from \eqref{path} that $Am_{0,t}B=A\sharp_t B$. Due to $A\sharp_t^\bot B=A\sharp_{1-t}B$, we infer that
\begin{corollary}\label{thmain1}
\label{thmcalle} Let $A_{j},B_{j}\in \mathcal{P}$. Then
\begin{align*}
\sum_{j=1}^{m}A_{j}\sharp B_{j}\leq \left( \sum_{j=1}^{m}A_{j}\sharp _{s}B_{j}\right) \sharp
\left(\sum_{j=1}^{m} A_{j}\sharp _{1-s}B_{j}\right)\leq \sum_{j=1}^{m}A_{j}\sharp \sum_{j=1}^{m}B_{j}\,.
\end{align*}
Moreover,
\begin{align*}
\left( \sum_{j=1}^{m}A_{j}\sharp _{s}B_{j}\right) \sharp
\left(\sum_{j=1}^{m} A_{j}\sharp _{1-s}B_{j}\right) \leq \left(
\sum_{j=1}^{m}A_{j}\sharp _{t}B_{j}\right) \sharp
\left(\sum_{j=1}^{m} A_{j}\sharp _{1-t}B_{j}\right)
\end{align*}
for $s$ between $t$ and $1-t$.
\end{corollary}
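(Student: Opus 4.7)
The plan is to derive the corollary as an immediate application of Theorem \ref{thmain}, instantiated in two ways. Before doing so, I would record the two structural facts needed. First, the interpolational path of the geometric mean $\sharp$ is exactly the family of weighted geometric means $\sharp_t$: since the power mean $m_0$ coincides with $\sharp$, the remark immediately preceding the statement (namely $Am_{0,t}B=A\sharp_tB$, coming from \eqref{path}) identifies this path. Second, the representing function of $\sharp_t$ is $x^t$, so its dual has representing function $x/x^t=x^{1-t}$, giving $\sharp_t^{\bot}=\sharp_{1-t}$; in particular $\sharp=\sharp_{1/2}$ is self-dual and symmetric, as needed for the interpolational-path framework to apply.

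For the first chain of inequalities, I would apply the inequality \eqref{1} of Theorem \ref{thmain} with $\sigma:=\sharp_s$, which is a genuine operator mean. Since $\sigma^{\bot}=\sharp_{1-s}$, the conclusion of \eqref{1} reads
\[
\sum_{j=1}^{m} A_j\sharp B_j \;\leq\; \left(\sum_{j=1}^{m} A_j\sharp_s B_j\right)\sharp \left(\sum_{j=1}^{m} A_j\sharp_{1-s} B_j\right) \;\leq\; \left(\sum_{j=1}^{m} A_j\right)\sharp \left(\sum_{j=1}^{m} B_j\right),
\]
which is exactly the first assertion of the corollary.

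For the monotonicity statement, I would apply the inequality \eqref{2} of Theorem \ref{thmain} with $\sigma:=\sharp$ and interpolational path $\sigma_t:=\sharp_t$. The hypothesis $\sigma_t^{\bot}=\sigma_{1-t}$ is precisely the dual identification recorded above, so \eqref{2} becomes
\[
\left(\sum_{j=1}^{m} A_j\sharp_s B_j\right)\sharp \left(\sum_{j=1}^{m} A_j\sharp_{1-s} B_j\right) \;\leq\; \left(\sum_{j=1}^{m} A_j\sharp_t B_j\right)\sharp \left(\sum_{j=1}^{m} A_j\sharp_{1-t} B_j\right)
\]
for $s$ between $t$ and $1-t$, which is the second assertion.

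No serious obstacle appears: the corollary is purely a matter of specializing Theorem \ref{thmain} to $\sharp$ and $\sharp_t$, and the only checks required are that $\sharp_s$ is an operator mean and that the dual of $\sharp_t$ is $\sharp_{1-t}$, both of which are immediate from the representing-function calculus.
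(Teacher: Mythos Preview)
Your proposal is correct and follows exactly the paper's own argument: the paper also derives the corollary directly from Theorem~\ref{thmain} after noting, via \eqref{path}, that $Am_{0,t}B=A\sharp_tB$ identifies $\sharp_t$ as the interpolational path of $\sharp$ and that $\sharp_t^\bot=\sharp_{1-t}$. Your explicit splitting into ``apply \eqref{1} with $\sigma=\sharp_s$'' and ``apply \eqref{2} with $\sigma_t=\sharp_t$'' is just a more detailed spelling-out of the same specialization.
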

%======================================================================================================%
Taking positive scalars $a_j$ and $b_j$ for $A_j$ and $B_j$,
respectively, in Theorem \ref{thmain} we get
\begin{corollary}
The classical Callebaut inequality \eqref{CAL} holds.
\end{corollary}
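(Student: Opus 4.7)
The plan is to specialize Theorem \ref{thmain} to the commutative setting by taking $A_j = a_j$ and $B_j = b_j$ as positive scalars, so that every operator mean and every $\sharp$ collapses to an elementary scalar expression. Two elementary identities suffice: for positive reals $x,y$ the formula $A\sharp_p B = A^{1/2}(A^{-1/2}BA^{-1/2})^p A^{1/2}$ reduces to $x\sharp_p y = x^{1-p} y^{p}$ (in particular $x\sharp y = \sqrt{xy}$); and since the representing function of $\sharp_s$ is $f(t)=t^s$, its dual has representing function $t/f(t)=t^{1-s}$, so $\sharp_s^{\bot}=\sharp_{1-s}$. The same calculation shows that the geometric mean $\sharp=\sharp_{1/2}$ is a symmetric operator mean whose interpolational path $\sharp_t$ satisfies $\sigma_t^{\bot}=\sigma_{1-t}$, so the hypothesis of (\ref{2}) is met.

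For the outer two inequalities of (\ref{CAL}), I would apply (\ref{1}) with $\sigma=\sharp_s$ (so $\sigma^{\bot}=\sharp_{1-s}$) and with scalar inputs, obtaining
$$\sum_{j=1}^m \sqrt{a_j b_j} \;\le\; \sqrt{\left(\sum_{j=1}^m a_j^{1-s} b_j^{s}\right)\left(\sum_{j=1}^m a_j^{s} b_j^{1-s}\right)} \;\le\; \sqrt{\left(\sum_{j=1}^m a_j\right)\left(\sum_{j=1}^m b_j\right)}.$$
Squaring both sides recovers the leftmost and rightmost links of the Callebaut chain; note that the product $\left(\sum a_j^{s}b_j^{1-s}\right)\left(\sum a_j^{1-s}b_j^{s}\right)$ is symmetric in $s\leftrightarrow 1-s$, so the labelling matches (\ref{CAL}) exactly. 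For the middle inequality, I would apply (\ref{2}) to scalars with the path $\sharp_t$, producing the analogous square-root inequality between the $t$-mean and the $s$-mean, which squares to the desired middle link.

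The only bookkeeping point is matching the hypothesis ``$s$ lies between $t$ and $1-t$'' from Theorem \ref{thmain} with the twofold condition of (\ref{CAL}): if $0\le t\le s\le 1/2$ then $1-t\ge 1/2\ge s\ge t$, while if $1/2\le s\le t\le 1$ then $1-t\le 1/2\le s\le t$, so in both cases $s$ does lie in the closed interval with endpoints $t$ and $1-t$. No real obstacle arises; the proof is a dictionary translation from operators to commuting scalars, followed by squaring.
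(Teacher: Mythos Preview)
Your proposal is correct and matches the paper's approach exactly: the paper simply says to take positive scalars $a_j,b_j$ for $A_j,B_j$ in Theorem~\ref{thmain}, and you have carried out precisely that specialization, including the verification that the parameter hypothesis ``$s$ between $t$ and $1-t$'' of \eqref{2} is equivalent to the two cases $0\le t\le s\le \tfrac12$ or $\tfrac12\le s\le t\le 1$ appearing in \eqref{CAL}. The only (harmless) extra detail you supply is the explicit identification $\sharp_s^{\bot}=\sharp_{1-s}$ and the squaring step, both of which the paper leaves implicit.
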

%======================================================================================================%
\section{Callebaut inequality for matrices}

To achieve some Callebaut inequalities for matrices, we need the following lemma.

\begin{lemma}
\label{le7} Let $0\leq r\leq 1$. Then $A^{r}+A^{-r}\leq A+A^{-1}$ for all $A\in \mathcal{P}_{n}$.
\end{lemma}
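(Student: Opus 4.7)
The statement is purely about a single positive definite matrix $A$, so nothing intrinsically multivariate is going on: both sides are functions of $A$ alone through the continuous functional calculus. The plan is therefore to reduce to a scalar inequality via the spectral theorem and then verify that scalar inequality on $(0,\infty)$.

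Concretely, I would write $A=\sum_i \lambda_i P_i$ for the spectral decomposition, with $\lambda_i>0$, so that $A^r+A^{-r}=\sum_i(\lambda_i^r+\lambda_i^{-r})P_i$ and $A+A^{-1}=\sum_i(\lambda_i+\lambda_i^{-1})P_i$. The desired operator inequality then follows from the pointwise scalar inequality
\begin{equation*}
 t^{r}+t^{-r}\leq t+t^{-1}\qquad (t>0,\ 0\leq r\leq 1).
\end{equation*}

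For the scalar claim I would set $u=\log t$ and observe that $\varphi(r):=t^{r}+t^{-r}=2\cosh(ru)$. The derivative $\varphi'(r)=2u\sinh(ru)$ satisfies $u\sinh(ru)\geq 0$ for all $r\geq 0$, because $\sinh(ru)$ shares the sign of $u$ when $r\geq 0$. Hence $\varphi$ is nondecreasing on $[0,\infty)$, which immediately gives $\varphi(r)\leq\varphi(1)$ for $0\leq r\leq 1$. Equivalently, one can note that $\cosh$ is even and increasing on $[0,\infty)$, so $\cosh(ru)=\cosh(r|u|)\leq\cosh(|u|)=\cosh(u)$.

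There is no real obstacle here; the only subtlety is recognizing that, because only one operator appears and the two sides are built from $A$ by the same (commutative) functional calculus, no noncommutative tool is needed and the problem reduces cleanly to an elementary calculus inequality.
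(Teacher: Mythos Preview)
Your proof is correct and follows essentially the same approach as the paper: diagonalize (equivalently, use the spectral decomposition) to reduce the operator inequality to the scalar inequality $t^{r}+t^{-r}\leq t+t^{-1}$ for $t>0$ and $0\leq r\leq 1$. The paper simply asserts this scalar fact, whereas you supply a clean justification via $t^{r}+t^{-r}=2\cosh(r\log t)$ and the monotonicity of $\cosh$ on $[0,\infty)$.
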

\begin{proof}
Suppose that $A=U\Gamma U^{\ast }$ with unitary $U$ and diagonal matrix $\Gamma $. Then
\begin{align*}
A^{r}+A^{-r}& =U(\Gamma ^{r}+\Gamma ^{-r})U^{\ast } \\
& \leq U(\Gamma +\Gamma ^{-1})U^{\ast }=A+A^{-1}
\end{align*}
since $t^{r}+t^{-r}\leq t+t^{-1}$ for any positive real number $t$ and $0\leq r\leq 1$.
\end{proof}
%======================================================================================================%
\begin{theorem}
\label{e:th} The function
\begin{equation*}
f(t)=A^{1+t}\otimes B^{1-t}+A^{1-t}\otimes B^{1+t}
\end{equation*}
is decreasing on the interval $[-1,0]$, increasing on the interval $[0,1]$ and attains its minimum at $t=0$ for all $A,B\in \mathcal{P}_{n}$.
\end{theorem}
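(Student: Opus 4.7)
The plan is to reduce everything to a clean application of Lemma \ref{le7} after rewriting $f(t)$ in a convenient form. First I would observe that $f$ is even, namely $f(-t)=f(t)$ by swapping the two summands, so that decreasing on $[-1,0]$ together with $t=0$ being a minimum will follow automatically from monotonic increase on $[0,1]$. Hence the entire claim reduces to showing $f(s)\leq f(t)$ whenever $0\leq s\leq t\leq 1$.

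The key step is the factorization. Set $C=A\otimes B^{-1}\in\mathcal{P}_{n^{2}}$, whose fractional powers satisfy $C^{t}=A^{t}\otimes B^{-t}$ and $C^{-t}=A^{-t}\otimes B^{t}$ (by joint spectral decomposition of $A$ and $B^{-1}$). Because $A\otimes B$ commutes with every $A^{\alpha}\otimes B^{\beta}$, we may write
\begin{equation*}
f(t)=(A\otimes B)\bigl(C^{t}+C^{-t}\bigr)=(A\otimes B)^{1/2}\bigl(C^{t}+C^{-t}\bigr)(A\otimes B)^{1/2}.
\end{equation*}
This simultaneously records positivity (everything in sight is self-adjoint and the two factors commute) and isolates the $t$-dependence in the single expression $C^{t}+C^{-t}$.

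Next, for $0<s\leq t\leq 1$, I would apply Lemma \ref{le7} to the positive definite matrix $D:=C^{t}\in\mathcal{P}_{n^{2}}$ with exponent $r=s/t\in[0,1]$, obtaining
\begin{equation*}
C^{s}+C^{-s}=D^{r}+D^{-r}\leq D+D^{-1}=C^{t}+C^{-t}.
\end{equation*}
The case $s=0$ is trivial. Sandwiching both sides between $(A\otimes B)^{1/2}$ preserves the inequality, giving $f(s)\leq f(t)$, as required.

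I do not expect a serious obstacle; the only point that needs a little care is verifying that the factorization is legitimate, which comes down to the commutativity $(A\otimes B)(A^{t}\otimes B^{-t})=(A^{t}\otimes B^{-t})(A\otimes B)$ and the identity $C^{t}=A^{t}\otimes B^{-t}$ for $C=A\otimes B^{-1}$. Once these are in place, Lemma \ref{le7} applied to $C^{t}$ with rescaled exponent does all the work, and the evenness argument immediately delivers the full theorem.
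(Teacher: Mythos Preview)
Your proof is correct and follows essentially the same route as the paper: both arguments apply Lemma \ref{le7} to the positive definite matrix $A^{t}\otimes B^{-t}$ with exponent $s/t$ to obtain $A^{s}\otimes B^{-s}+A^{-s}\otimes B^{s}\leq A^{t}\otimes B^{-t}+A^{-t}\otimes B^{t}$, then conjugate by $A^{1/2}\otimes B^{1/2}=(A\otimes B)^{1/2}$ and invoke evenness of $f$. The only cosmetic difference is that you name $C=A\otimes B^{-1}$ explicitly and write the factorization up front, whereas the paper substitutes directly into the lemma.
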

\begin{proof}
Let $0\leq \alpha \leq \beta $. Taking $0\leq r=\frac{\alpha }{\beta }\leq 1$ and replacing $A$ by $A^{\beta}\otimes B^{-\beta }$ in Lemma \ref{le7}, we get
\begin{equation*}
A^{\alpha }\otimes B^{-\alpha }+A^{-\alpha }\otimes B^{\alpha }\leq A^{\beta
}\otimes B^{-\beta }+A^{-\beta }\otimes B^{\beta }.
\end{equation*}
This further implies that
\begin{align*}
\left( A^{1/2}\otimes B^{1/2}\right) (A^{\alpha }\otimes B^{-\alpha
}+A^{-\alpha }& \otimes B^{\alpha })\left( A^{1/2}\otimes B^{1/2}\right)  \\
& \leq \left( A^{1/2}\otimes B^{1/2}\right) \left( A^{\beta }\otimes
B^{-\beta }+A^{-\beta }\otimes B^{\beta }\right) \left( A^{1/2}\otimes
B^{1/2}\right)\,,
\end{align*}
whence
\begin{equation*}
A^{1+\alpha }\otimes B^{1-\alpha }+A^{1-\alpha }\otimes B^{1+\alpha }\leq
A^{1+\beta }\otimes B^{1-\beta }+A^{1-\beta }\otimes B^{1+\beta }
\end{equation*}
Note that $f(t)=f(-t)$, so $f$ is decreasing on the interval
$[-1,0]$. The last statement on minimum point is obvious from the
preceding ones.
\end{proof}
%======================================================================================================%
\begin{corollary}
\label{co2.2} The function
\begin{equation*}
g(t)=A^{t}\otimes B^{1-t}+A^{1-t}\otimes B^{t}
\end{equation*}
is decreasing on $[0,1/2]$, increasing on $[1/2,1]$ and attains its minimum
at $t=\displaystyle\frac{1}{2}$ for all $A,B\in \mathcal{P}_{n}$.
\end{corollary}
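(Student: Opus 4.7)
The plan is to reduce Corollary \ref{co2.2} to Theorem \ref{e:th} via a rescaling of the bases together with a reparametrization of the variable. Since $A, B \in \mathcal{P}_n$, their positive square roots $A^{1/2}, B^{1/2}$ also lie in $\mathcal{P}_n$, so Theorem \ref{e:th} applies to the pair $(A^{1/2}, B^{1/2})$.

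First, I would introduce the auxiliary function
\begin{equation*}
F(u) \;=\; (A^{1/2})^{1+u} \otimes (B^{1/2})^{1-u} + (A^{1/2})^{1-u} \otimes (B^{1/2})^{1+u}, \qquad u \in [-1,1],
\end{equation*}
and note that by Theorem \ref{e:th} (applied to $A^{1/2}, B^{1/2}$ in place of $A, B$), $F$ is decreasing on $[-1,0]$, increasing on $[0,1]$, and attains its minimum at $u = 0$.

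Next I would verify the identity that links $F$ and $g$. Using $(A^{1/2})^{1\pm u} = A^{(1\pm u)/2}$ and similarly for $B^{1/2}$, the substitution $u = 2t-1$ (equivalently $t = (u+1)/2$) gives $(1+u)/2 = t$ and $(1-u)/2 = 1-t$, so
\begin{equation*}
F(2t-1) \;=\; A^{t}\otimes B^{1-t} + A^{1-t}\otimes B^{t} \;=\; g(t).
\end{equation*}
The affine bijection $t \mapsto 2t-1$ from $[0,1]$ onto $[-1,1]$ is strictly increasing, and it sends $[0,1/2]$ to $[-1,0]$, sends $[1/2,1]$ to $[0,1]$, and sends $t = 1/2$ to $u = 0$. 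Composing the monotonicity statements for $F$ with this increasing reparametrization immediately yields the monotonicity of $g$ on $[0,1/2]$ and $[1/2,1]$ and the fact that its minimum is attained at $t=1/2$.

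There is no real obstacle here; the whole argument is a two-line substitution once one sees that halving the exponents in Theorem \ref{e:th} is what converts $f$ (with exponents $1 \pm t$) into $g$ (with exponents $t$ and $1-t$). The only point worth being careful about is confirming that $A^{1/2}, B^{1/2} \in \mathcal{P}_n$, which is automatic since positive-definiteness is preserved under the continuous functional calculus applied to $s \mapsto s^{1/2}$.
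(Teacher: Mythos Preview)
Your proof is correct and is exactly the paper's argument: replace $A,B$ by $A^{1/2},B^{1/2}$ in Theorem \ref{e:th} and then make the affine change of variable sending $(1+u)/2$ to $t$ (equivalently $u=2t-1$). No further comment is needed.
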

\begin{proof}
The proof follows by replacing $A,B$ by $A^{1/2},B^{1/2}$ in Theorem \ref{e:th},  respectively, and then replacing $\displaystyle\frac{1+t}{2}$ by $t$.
\end{proof}
%======================================================================================================%
The following theorem is our second version of the Callebaut
inequality \eqref{CAL}.

\begin{theorem}\label{main}
\label{thmcalle}Let $A_{j},B_{j}\in \mathcal{P}_{n}$ ,$~1\leq j\leq m$. Then
\begin{align*}
2\sum_{j=1}^{m}\left( A_{j}\sharp B_{j}\right) & \otimes
\sum_{j=1}^{m}\left( A_{j}\sharp B_{j}\right) \\
& \leq \sum_{j=1}^{m}\left( A_{j}\sharp _{s}B_{j}\right) \otimes
\sum_{j=1}^{m}\left( A_{j}\sharp _{1-s}B_{j}\right) +\sum_{j=1}^{m}\left(
A_{j}\sharp _{1-s}B_{j}\right) \otimes \sum_{j=1}^{m}\left( A_{j}\sharp
_{s}B_{j}\right) \\
& \leq \sum_{j=1}^{m}\left( A_{j}\sharp _{t}B_{j}\right) \otimes
\sum_{j=1}^{m}\left( A_{j}\sharp _{1-t}B_{j}\right) +\sum_{j=1}^{m}\left(
A_{j}\sharp _{1-t}B_{j}\right) \otimes \sum_{j=1}^{m}\left( A_{j}\sharp
_{t}B_{j}\right) \\
& \leq \sum_{j=1}^{m}A_{j}\otimes
\sum_{j=1}^{m}B_{j}+\sum_{j=1}^{m}B_{j}\otimes \sum_{j=1}^{m}A_{j}.
\end{align*}
for $0\leq t\leq s\leq \frac{1}{2}$ or $\frac{1}{2}\leq s\leq t\leq 1$.
\end{theorem}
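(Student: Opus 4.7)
My plan is to reduce the inequality between sums to a pairwise inequality indexed by pairs $(j,k)$ — using the bilinearity of $\otimes$ — and then to apply Corollary \ref{co2.2} with its two positive-definite inputs taken to be two independent matrices $C_j$ and $C_k$. The bilinearity step is immediate: each of the four expressions in the theorem's display is a double sum of the form $\sum_{j,k}(\cdot)$; for example,
\[
\Bigl(\sum_j A_j\sharp_s B_j\Bigr)\otimes\Bigl(\sum_k A_k\sharp_{1-s}B_k\Bigr)=\sum_{j,k}(A_j\sharp_s B_j)\otimes(A_k\sharp_{1-s}B_k),
\]
and analogously for the other three positions. So the theorem follows from the pairwise four-term chain obtained by removing the outer summation signs and letting the indices $j$ and $k$ vary independently.

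For the pairwise step, set $C_\ell:=A_\ell^{-1/2}B_\ell A_\ell^{-1/2}\in\mathcal{P}_n$, so that $A_\ell\sharp_u B_\ell=A_\ell^{1/2}C_\ell^u A_\ell^{1/2}$ for each $u\in[0,1]$. Writing $T:=A_j^{1/2}\otimes A_k^{1/2}$ (a positive operator on $\mathbb{C}^n\otimes\mathbb{C}^n$) and
\[
h_{j,k}(u):=C_j^u\otimes C_k^{1-u}+C_j^{1-u}\otimes C_k^u,
\]
the first three terms of the pairwise chain become $T\,h_{j,k}(1/2)\,T$, $T\,h_{j,k}(s)\,T$, and $T\,h_{j,k}(t)\,T$, while the last term $A_j\otimes B_k+B_j\otimes A_k$ equals $T\,h_{j,k}(0)\,T$, since $A_\ell^{1/2}C_\ell A_\ell^{1/2}=B_\ell$.

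Applying Corollary \ref{co2.2} with the matrices $A$ and $B$ there replaced by $C_j$ and $C_k$ shows that $h_{j,k}$ is decreasing on $[0,1/2]$, increasing on $[1/2,1]$, and attains its minimum at $1/2$. Under the hypothesis $0\le t\le s\le 1/2$ or $1/2\le s\le t\le 1$, the parameter $s$ lies between $1/2$ and $t$ on the appropriate monotonic branch, yielding $h_{j,k}(1/2)\le h_{j,k}(s)\le h_{j,k}(t)\le h_{j,k}(0)$. Conjugation by the positive operator $T$ preserves these operator inequalities, which gives the pairwise chain; summing over $(j,k)$ then gives the theorem. The only point worth noting is that we apply Corollary \ref{co2.2} to the mixed pair $(C_j,C_k)$ with possibly $j\ne k$, rather than to a single matrix, but this is allowed since that corollary accepts two arbitrary matrices in $\mathcal{P}_n$ as inputs. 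I anticipate no substantive obstacle, as all the analytic content of the theorem is packaged inside Corollary \ref{co2.2}.
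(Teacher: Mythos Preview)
Your argument is correct and is essentially the paper's own proof: both reduce via bilinearity of $\otimes$ to the pairwise terms indexed by $(i,j)$, apply Corollary~\ref{co2.2} to the pair $(C_i,C_j)$ with $C_\ell=A_\ell^{-1/2}B_\ell A_\ell^{-1/2}$, conjugate by $A_i^{1/2}\otimes A_j^{1/2}$, and then sum. The only cosmetic difference is that the paper bounds the top end by $f(1)$ in the $\tfrac12\le s\le t\le 1$ case, but since $h_{j,k}(0)=h_{j,k}(1)$ this is the same as your $h_{j,k}(0)$ bound.
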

\begin{proof}
In order to prove the above inequalities we first prove that the function
\begin{equation*}
f(t)=\sum_{j=1}^{m}\left( A_{j}\sharp _{t}B_{j}\right) \otimes
\sum_{j=1}^{m}\left( A_{j}\sharp _{1-t}B_{j}\right) +\sum_{j=1}^{m}\left(
A_{j}\sharp _{1-t}B_{j}\right) \otimes \sum_{j=1}^{m}\left( A_{j}\sharp
_{t}B_{j}\right)
\end{equation*}
is decreasing on $[0,1/2]$, increasing on $[1/2,1]$ and attains its minimum
at $t=\displaystyle\frac{1}{2}$. By Corollary \ref{co2.2} the function $t\rightarrow C_{i}^{t}\otimes C_{j}^{1-t}+C_{i}^{1-t}\otimes C_{j}^{t},$
where $C_{j}=A_{j}^{-1/2}B_{j}A_{j}^{-1/2},$ is decreasing on $[0,1/2]$,
increasing on $[1/2,1]$ and attains its minimum at $t=\displaystyle\frac{1}{
2}.$ Hence the function
\begin{align*}
t& \rightarrow \left( A_{i}^{1/2}\otimes A_{j}^{1/2}\right) \left(
C_{i}^{t}\otimes C_{j}^{1-t}+C_{i}^{1-t}\otimes C_{j}^{t}\right) \left(
A_{i}^{1/2}\otimes A_{j}^{1/2}\right) \\
&~~~~~~~~~~~~~~~~ =\left( A_{i}\sharp _{t}B_{i}\right) \otimes \left(
A_{j}\sharp _{1-t}B_{j}\right) +\left( A_{i}\sharp _{1-t}B_{i}\right)
\otimes \left( A_{j}\sharp _{t}B_{j}\right)
\end{align*}
is decreasing on $[0,1/2]$, increasing on $[1/2,1]$ and attains its minimum
at $t=\displaystyle\frac{1}{2}.$ Since
\begin{align*}
\sum_{j=1}^{m}\left( A_{j}\sharp _{t}B_{j}\right) & \otimes
\sum_{j=1}^{m}\left( A_{j}\sharp _{1-t}B_{j}\right) +\sum_{j=1}^{m}\left(
A_{j}\sharp _{1-t}B_{j}\right) \otimes \sum_{j=1}^{m}\left( A_{j}\sharp
_{t}B_{j}\right) \\
& =\sum_{i,\text{ }j=1}^{m}\Big( \left( A_{i}\sharp
_{t}B_{i}\right)\otimes \left(A_{j}\sharp
_{1-t}B_{j}\right)+\left(A_{i}\sharp
_{1-t}B_{i}\right)\otimes\left(A_{j}\sharp _{t}B_{j}\right)\Big)
\end{align*}
therefore the function $f(t)$ is decreasing on $[0,1/2]$, increasing on $[1/2,1]$ and attains its minimum at $t=\displaystyle\frac{1}{2}.$ On using
the fact that $f(1/2)\leq f(s)\leq f(t)\leq f(1),$ for $\frac{1}{2}\leq
s\leq t\leq 1,$ or $f(1/2)\leq f(s)\leq f(t)\leq f(0),$ for $\frac{1}{2}\geq
s\geq t\geq 0,$ we get the desired result.
\end{proof}
%======================================================================================================%
We have the following corollary as a consequence to Theorem \ref{thmcalle}.

\begin{corollary}
Let $A_{j},B_{j}\in \mathcal{P}_{n}$ ,$~1\leq j\leq m$. Then
\begin{align*}
\sum_{j=1}^{m}\left( A_{j}\sharp B_{j}\right) \circ \sum_{j=1}^{m}\left(
A_{j}\sharp B_{j}\right)& \leq \sum_{j=1}^{m}\left( A_{j}\sharp
_{s}B_{j}\right) \circ \sum_{j=1}^{m}\left( A_{j}\sharp _{1-s}B_{j}\right) \\
&\leq \sum_{j=1}^{m}\left( A_{j}\sharp _{t}B_{j}\right) \circ
\sum_{j=1}^{m}\left( A_{j}\sharp _{1-t}B_{j}\right) \\
& \leq \sum_{j=1}^{m}A_{j}\circ \sum_{j=1}^{m}B_{j}
\end{align*}
for $0\leq t\leq s\leq \frac{1}{2}$ or $\frac{1}{2}\leq s\leq t\leq 1$.
\end{corollary}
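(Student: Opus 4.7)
The plan is to deduce this Hadamard-product corollary directly from Theorem \ref{main} via the standard principal-compression realization of the Hadamard product inside the tensor product. Concretely, let $\{e_1,\dots,e_n\}$ be the standard basis of $\mathbb{C}^n$, and define the isometry $J:\mathbb{C}^n\to\mathbb{C}^n\otimes\mathbb{C}^n$ by $Je_i=e_i\otimes e_i$. A one-line computation of matrix entries shows
\[
J^{*}(X\otimes Y)J=X\circ Y\qquad\text{for all }X,Y\in\mathcal{M}_n(\mathbb{C}).
\]
This is the single tool we need, and its verification is routine; no additional structure is required.

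Next, I would simply compress the four-term chain of operator inequalities in Theorem \ref{main} by the map $T\mapsto J^{*}TJ$. Since $J^{*}(\cdot)J$ is a positive (in fact completely positive) linear map, it preserves the operator order, so each ``$\leq$'' in Theorem \ref{main} survives the compression. In the compressed inequality, every summand of the form $X\otimes Y$ becomes $X\circ Y$ by the identity above.

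Finally, I would use commutativity of the Hadamard product, namely $X\circ Y=Y\circ X$, to collapse each pair of symmetric summands. For instance, applying $J^{*}(\cdot)J$ to
\[
\sum_{j}(A_j\sharp_s B_j)\otimes\sum_{j}(A_j\sharp_{1-s}B_j)+\sum_{j}(A_j\sharp_{1-s}B_j)\otimes\sum_{j}(A_j\sharp_s B_j)
\]
yields $2\sum_{j}(A_j\sharp_s B_j)\circ\sum_{j}(A_j\sharp_{1-s}B_j)$, and analogously for the other three terms of Theorem \ref{main}. Dividing the resulting chain by $2$ produces exactly the three claimed inequalities, valid on the same parameter region $0\leq t\leq s\leq\tfrac12$ or $\tfrac12\leq s\leq t\leq 1$.

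The proof has essentially no obstacle: the only point that deserves a brief verification is the identity $J^{*}(X\otimes Y)J=X\circ Y$, and the only mildly delicate observation is that after compression the two cross-ordered tensor summands collapse to a single Hadamard term (thanks to the commutativity of $\circ$), which is precisely what allows the ``$2$'' on the left-hand side of Theorem \ref{main} to cancel against the doubled right-hand sides and yield a clean three-term Callebaut-type inequality for the Hadamard product.
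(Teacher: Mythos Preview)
Your argument is correct and is exactly the intended derivation: the paper presents this corollary without an explicit proof, simply noting it is ``a consequence to Theorem~\ref{thmcalle},'' and the standard passage from tensor to Hadamard product is precisely the principal-compression identity $J^{*}(X\otimes Y)J=X\circ Y$ that you use. Your extra remark that the commutativity of $\circ$ collapses each symmetric pair and cancels the factor $2$ is the one step worth making explicit, and you handle it correctly.
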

%======================================================================================================%
The next result is a consequence of Theorem \ref{main} with
$B_j=I_n\,\,(1\leq j\leq n)$.
\begin{corollary}
\label{etet} Let $A_{j}\in \mathcal{P}_{n}$ ,$~1\leq j\leq m$. Then
\begin{eqnarray}\label{revise}
\left( \frac{1}{m}\sum_{j=1}^{m}A_{j}^{1/2}\right) \circ \left( \frac{1}{m}
\sum_{j=1}^{m}A_{j}^{1/2}\right) \leq \left( \frac{1}{m}
\sum_{j=1}^{m}A_{j}^{t}\right) \circ \left( \frac{1}{m}
\sum_{j=1}^{m}A_{j}^{1-t}\right) \leq \frac{1}{m}\sum_{j=1}^{m}(A_{j}\circ
I_{n})
\end{eqnarray}
for all $0\leq t\leq 1.$
\end{corollary}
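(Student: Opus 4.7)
The corollary is announced as the specialization $B_j = I_n$ of Theorem \ref{main}, so my plan is essentially bookkeeping: substitute, pass from $\otimes$ to $\circ$, then rescale. First I would simplify the weighted geometric means at $B_j = I_n$. From $A \sharp_t B = A^{1/2}(A^{-1/2} B A^{-1/2})^t A^{1/2}$ one obtains
\begin{equation*}
A_j \sharp_t I_n = A_j^{1-t}, \qquad A_j \sharp_{1-t} I_n = A_j^t, \qquad A_j \sharp I_n = A_j^{1/2},
\end{equation*}
while $\sum_{j=1}^m I_n = m I_n$. In Theorem \ref{main} I would then take $s = 1/2$, which collapses the two leftmost terms of the chain into one. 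Since the resulting $t$-term is invariant under $t \mapsto 1-t$, the surviving inequality becomes valid for every $t \in [0,1]$ and reads
\begin{align*}
2 \sum_{j=1}^m A_j^{1/2} \otimes \sum_{j=1}^m A_j^{1/2} &\leq \sum_{j=1}^m A_j^t \otimes \sum_{j=1}^m A_j^{1-t} + \sum_{j=1}^m A_j^{1-t} \otimes \sum_{j=1}^m A_j^t \\
&\leq m \left(\sum_{j=1}^m A_j\right) \otimes I_n + m\, I_n \otimes \left(\sum_{j=1}^m A_j\right).
\end{align*}

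Next I would invoke the standard identification of the Hadamard product with the compression of the Kronecker product to the $n$-dimensional subspace spanned by $\{e_i \otimes e_i\}_{i=1}^n$: for $X, Y \in \mathcal{M}_n(\mathbb{C})$, the matrix $X \circ Y$ is precisely the principal submatrix of $X \otimes Y$ indexed by those vectors. Since compression to a principal submatrix preserves the Loewner order, the entire chain survives under this operation. Commutativity of $\circ$ collapses each symmetric pair $X \otimes Y + Y \otimes X$ to $2(X \circ Y)$, and in particular the far-right term becomes $2m \sum_j(A_j \circ I_n)$. Dividing the resulting chain by $2m^2$ and using bilinearity of $\circ$ delivers \eqref{revise}.

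I do not anticipate any real obstacle. The only step that deserves a careful word is the Kronecker-to-Hadamard compression, which is what bridges $\mathcal{M}_{n^2}(\mathbb{C})$, where Theorem \ref{main} is stated, with $\mathcal{M}_n(\mathbb{C})$, where the corollary lives. Once that principle is in place, everything else is substitution and scalar normalization.
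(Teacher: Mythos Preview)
Your proposal is correct and follows exactly the route the paper indicates: specialize Theorem~\ref{main} at $B_j=I_n$, take $s=\tfrac12$ so the two leftmost terms coincide, pass from $\otimes$ to $\circ$ via the standard principal-submatrix compression (the same step implicit in the preceding Hadamard corollary), and rescale by $2m^2$. The paper provides no further details beyond the sentence announcing the substitution, so you have simply spelled out what was left to the reader.
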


%======================================================================================================%

\textbf{Acknowledgment.} The authors would like to sincerely thank
Professor S. Wada and the anonymous referee for carefully reading
the article and useful comments.
\medskip


\begin{thebibliography}{99}

\bibitem{AAN} H. Alzer, T. Ando and Y. Nakamura, \textit{The inequalities of W. Sierpinski and Ky Fan}, J. Math. Anal. Appl. \textbf{149} (1990), no. 2, 497--512.

\bibitem{ABM} Lj. Arambasi\'c, D. Baki\'c and M.S. Moslehian, \textit{A treatment of the Cauchy--Schwarz inequality in $C^*$-modules}, J. Math. Anal. Appl. \textbf{381} (2011) 546--556.

\bibitem{CAL} D.K. Callebaut, \textit{Generalization of the Cauchy--Schwarz inequality}, J. Math. Anal. Appl. \textbf{12} (1965), 491–-494.

\bibitem{DRA} S.S. Dragomir, \textit{A survey on Cauchy--Bunyakovsky--Schwarz type discrete inequalities}, J. Inequal. Pure Appl. Math., \textbf{4} (2003), no. 3, Article 63, 142 pp.

\bibitem{EM} C.J. Eliezer and B. Mond, \textit{Generalizations of the Cauchy--Schwarz and H\"older inequalities}, Inequalities, III (Proc. Third Sympos., Univ. California, Los Angeles, Calif., 1969; dedicated to the memory of Theodore S. Motzkin), 97--101. Academic Press, New York, 1972.

\bibitem{seo} T. Furuta, J. Mi\'ci\'c Hot, J. Pe\v cari\'c and Y. Seo, \textit{Mond--Pe\v cari\'c Method in Operator Inequalities},Element, Zagreb, 2005.

\bibitem{FUJ} J.I. Fujii, \textit{Operator-valued inner product and operator inequalities}, Banach J. Math. Anal. \textbf{2} (2008), no. 2, 59--67.

\bibitem{F-K} J.I. Fujii and E. Kamei, \textit{Uhlmann's interpolational method for operator means}, Math. Japon. \textbf{34} (1989), no. 4, 541–-547.

\bibitem{nak} M. Fujii, S. Izumino, R. Nakamoto and Y. Seo, \textit{Operator inequalities related to Cauchy–-Schwarz and Holder–McCarthy inequalities}, Nihonkai Math. J. \textbf{8} (1997) 117-–122.

\bibitem{I-V} D. Ili\v{s}evi\'c and S. Varo\v{s}anec, \textit{On the Cauchy--Schwarz inequality and its reverse in
semi-inner product $C^*$-modules}, Banach J. Math. Anal. \textbf{1} (2007), 78--84.

\bibitem{JOC} D.R. Joci\'c, \textit{Cauchy--Schwarz norm inequalities for weak$^*$-integrals of operator valued functions}, J. Funct. Anal. \textbf{218} (2005), no. 2, 318--346.

\bibitem{JOI} M. Joi\c ta, \textit{On the Cauchy--Schwarz inequality in $C^*$-algebras}, Math. Rep. (Bucur.) \textbf{3(53)} (2001), no. 3, 243--246.

\bibitem{KIT} F. Kittaneh, \textit{Some norm inequalities for operators}, Canad. Math. Bull. \textbf{42} (1999), no. 1, 87–-96.

\bibitem{KA} F. Kubo and T. Ando, \textit{Means of positive linear operators}, Math. Ann. \textbf{246}(3) (1979), 205-–224.

\bibitem{MM} H.W. McLaughlin and F.T. Metcalf, \textit{Remark on a recent generalization of the Cauchy--Schwarz inequality}, J. Math. Anal. Appl. \textbf{18} (1967), 522--523.

\bibitem{MOS} M.S. Moslehian and L.-E. Persson, \textit{Reverse Cauchy-Schwarz inequalities for positive $C^*$-valued sesquilinear forms}, Math. Inequal. Appl. \textbf{12} (2009), no. 4, 701--709.

\bibitem{NIC} C.P. Niculescu, \textit{Converses of the Cauchy--Schwarz inequality in the $C^*$-framework}, An. Univ. Craiova Ser. Mat. Inform. \textbf{26} (1999), 22--28.

\bibitem{WAD} S. Wada, \textit{On some refinement of the Cauchy--Schwarz inequality}, Linear Algebra Appl. \textbf{420} (2007), no. 2-3, 433--440.

\end{thebibliography}
\end{document}